\definecolor{junglegreen}{RGB}{65,150, 00}
\declaretheoremstyle[
  spaceabove=1em, spacebelow=1em,
  headfont= \bfseries,
  notefont=\mdseries, notebraces={(}{)},
  bodyfont=\normalfont,
  postheadspace=1em,
  qed= \tiny $\blacksquare$
]{example}
\declaretheoremstyle[
  spaceabove=\topsep, spacebelow=\topsep,
  headfont= \bfseries,
  notefont=\mdseries, notebraces={(}{)},
  bodyfont=\normalfont,
  postheadspace=1em,
  qed= $\blacktriangledown$
]{remark}
\declaretheoremstyle[
  spaceabove=\topsep, spacebelow=\topsep,
  headfont= \bfseries,
  notefont=\mdseries, notebraces={(}{)},
  bodyfont=\it{\normalfont},
  postheadspace=1em,
  qed= \qedsymbol
]{obvious}
\theoremstyle{plain}
\newtheorem{Thm}{Theorem}[section]
\newtheorem{Lem}[Thm]{Lemma}
\theoremstyle{definition}
\newtheorem{Def}[Thm]{Definition}
\declaretheorem[style=example, name = Example, sharenumber = Thm]{Exam}
\numberwithin{equation}{section}
\numberwithin{table}{section}
\newcommand{\N}{\mathbb{N}}
\newcommand{\Z}{\mathbb{Z}}
\newcommand{\Q}{\mathbb{Q}}
\newcommand{\QG}{\Q[G]}
\newcommand{\QH}{\Q[H]}
\renewcommand{\S}{\mathfrak{S}}
\newcommand{\T}{\mathfrak{T}}
\newcommand{\z}{\mathcal{Z}}
\renewcommand{\H}{\mathcal{H}}
\renewcommand{\L}{\mathcal{L}}
\newcommand{\K}{\mathcal{K}}
\newcommand{\D}{\mathcal{D}}
\newcommand{\dsum}{\displaystyle\sum}
\DeclareMathOperator{\Aut}{Aut}
\DeclareMathOperator{\Span}{Span}
\newcommand{\thmref}[1]{Theorem \ref{#1}}
\newcommand{\lemref}[1]{Lemma \ref{#1}}
\newcommand{\defref}[1]{Definition \ref{#1}}
\newcommand{\examref}[1]{Example \ref{#1}}
\newcommand{\secref}[1]{Section \ref{#1}}
\begin{document}

\title{Enumeration Techniques on Cyclic Schur Rings}
\author{Andrew Misseldine\footnote{Andrew Misseldine, Southern Utah University, andrewmisseldine@suu.edu, (telephone) 435-865-8228, (fax) 435-865-8666}}
\date{17 September 2021}

\maketitle

\begin{abstract}
{Any Schur ring is uniquely determined by a partition of the elements of the group. In this paper we present a general technique for enumerating Schur rings over cyclic groups using traditional Schur rings. We also survey recent efforts to enumerate Schur rings over cyclic groups of specific orders. 
}

\textbf{Keywords}:
Schur ring, cyclic group, association scheme, lattice of subgroups

\textbf{AMS Classification}: 
20c05, 
05c25, 
05e30, 
05a15 
05e16 
20k27 

\end{abstract}

\section{Introduction}
The Bell numbers $B(n)$ count the number of ways a finite set $X$  can be partitioned when $|X|=n$. Without the loss of generality, we may assume $X=\{0, 1, 2, \ldots, n-1\}$. Hence, the Bell numbers count the number of \emph{set partitions} of $X$. The first few Bell numbers are given as $1, 1, 2, 5, 15, 52, 203, 877, 4140, \ldots$\footnote{Sequence A000110 in OEIS.} Could a similar counting problem be considered for finite groups? That is, could one construct a sequence of numbers which enumerate the group partitions of a finite group $G$ when $|G|=n$? We would only want to count partitions that in some way ``respect'' the group structure. Some examples of group-theoretic partitions could be conjugacy classes, automorphism classes, inverse pairs, (double) cosets, or membership of a subgroup, to name a few possibilities. All of these examples of group-theoretic partitions are, in fact, examples of Schur rings (see \defref{def:schurring}). 

Schur rings were first introduced by Wielandt \cite{Wielandt49} as a tool to study permutation groups and are based upon a method originated by Schur \cite{Schur33}. See \cite{Wielandt64} for a detailed treatment of this ``Method of Schur.'' As Schur rings themselves are subrings of groups algebras first considered by Schur, their name is fitting. On the other hand, Schur rings are constructed using partitions of the group which themselves satisfy the group axioms, that is, Schur partitions are partitions of a group which are closed under identity, inverses, and multiplication. Hence, Schur partitions, that is, those partitions of a group which afford a Schur ring, are the natural candidate to be the group-theoretic partition we seek to count in this paper. 

In the category of sets, the only invariant of a set is its cardinality. Hence, the Bell numbers are indexed by natural numbers. Our counting of Schur rings over a group should likewise be indexed by its isomorphism type. The family of cyclic groups is a natural place to begin this enumeration since the set $X$ can always be given the structure of a cyclic group. Let $\z_n=\langle z\rangle$ denote the cyclic group of order $n$, written multiplicatively.  Schur rings over cyclic groups have been of great interest for the last few decades because of their connection to algebraic graph theory (see \cite{Muzychuk09}). Cyclic groups are also among a small set of group families that currently possess a classification theorem of Schur rings (see \thmref{thm:fundamental}). For these reasons, we consider the sequence of numbers $\Omega(n)$ which denote the number of Schur rings over the cyclic group $\z_n$.

We will now establish some notation for this paper. Let $G$ be a finite group, and let $\QG$ denote its rational group algebra. Let $\L(G)$ denote the lattice of subgroups of $G$. For any subset $C\subseteq G$, we may identify this subset with an element of the group algebra $\QG$, namely, $\sum_{g\in C} g\in \QG$. Such an element is called a \emph{simple quantity}, and when there is no confusion we will denote simple quantities by the subsets themselves. Define 
$C^* := \{x^{-1} \mid x\in C\}$ for all 
$C\subseteq G$. 

\begin{Def}\label{def:schurring} Let $\{C_1, C_2, \ldots, C_r\}$ be a partition of $G$, and let $\S$ be the subspace of $\QG$ spanned by the simple quantities $C_1, C_2,\ldots C_r$. We say that $\S$ is a \emph{Schur ring}\label{def:schurring} over $G$ if 
\begin{enumerate}
\item $C_1 = \{1\}$, 
\item For each $i$, there is a $j$ such that $C_i^* = C_j$,
\item For each $i$ and $j$, $C_iC_j = \dsum_{k=1}^r \lambda_{ijk}C_k$, for $\lambda_{ijk}\in \N$.
\end{enumerate}
The sets $C_1, C_2, \ldots, C_r$ are called the \emph{primitive sets} of $\S$ or the \emph{$\S$-classes}
.
\end{Def}

Note that a Schur ring $\S$ is uniquely determined by its associated partition of $G$. We will denote this partition as $\D(\S)$. 

The first attempt to classify Schur rings began with Schur himself. In this original setting, the \emph{transitivity module} of a permutation group $G$ acting on a regular subgroup $H$ is the subspace of $\Q[H]$ spanned by the $G_e$-orbits, where $G_e$ is the stabilizer of the identity $e\in H$. Transitivity modules are always Schur rings. Schur conjectured that all Schur rings over $H$ were transitivity modules for some $G\le \text{Sym}(H)$. Such a Schur ring is called \emph{Schurian}, first coined by P\"oschel \cite{Poschel74}, and a group $H$ is likewise called \emph{Schurian}  if all Schur rings over $H$ are Schurian. Wielandt \cite{Wielandt64} disproved this conjecture. As Schur himself mostly worked with cyclic Schur rings, another conjecture suggested that all cyclic groups were Schurian. This conjecture was likewise disproved by Evdokimov and Ponomarenko in \cite{Ponomarenko01}. Evdokimov, Kov\'acs, and Ponomarenko later classified all Schurian cyclic groups in \cite{Ponomarenko13} as those cyclic groups whose orders\footnote{We point out that all the cyclic group examples considered in \secref{sec:proof} are Schurian by this theorem.} are $p^k, pq^k, 2pq^k, pqr, 2pqr$, where $p, q, r$ are distinct primes. See \cite{Ponomarenko16} for a history of efforts made to classify Schurian cyclic and non-cyclic groups. 

The first attempt to enumerate Schur rings began with Liskovets and P\"oschel \cite{Liskovets} who enumerated wreath-indecomposable Schur rings over the cyclic group of order $p^n$, where $p$ is an odd prime, in an attempt to count certain circulant graphs. Kov\'acs \cite{Kovacs} later solves the more difficult problem when $p=2$. In \cite{Counting}, the author provides recursive formulas to count the number of Schur rings over a cyclic group of order $p^n$, where $p$ is an arbitrary prime. In \cite{CountingIII}, Keller, the author, and Sullivan enumerate all Schur rings over cyclic groups of order $pq$ and $4p$, where $p$ and $q$ are distinct odd primes. In \cite{Wagner}, Humphries and Wagner enumerate symmetric Schur ring over cyclic groups and show these Schur rings are in one-to-one or one-to-two correspondence with central Schur rings of projective special linear groups. In \cite{Ziv14}, Ziv-Av enumerates all Schur rings over small finite groups up to order 63 using computer software. 

In this paper, we generalize the enumeration techniques illustrated introduced in \cite{Counting}, as well as survey the enumerations found in \cite{Counting} and \cite{CountingIII}. Also, using the computer software MAGMA \cite{Magma}, we enumerate the number of all Schur rings over cyclic groups of orders up to 400 (see Table \ref{tab:NumberSRings}).

\begin{table}[p]
 \centering
  \caption{The Number of Schur Rings over $\z_n$}
\resizebox{0.83\textwidth}{!}{
    \begin{tabular}{|c|c||c|c||c|c||c|c||c|c||c|c||c|c||c|c||c|c||c|c|}
    \hline
    $n$ & $\Omega(n)$ & $n$ & $\Omega(n)$ & $n$ & $\Omega(n)$ & $n$ & $\Omega(n)$ & $n$ & $\Omega(n)$ & $n$ & $\Omega(n)$ & $n$ & $\Omega(n)$ & $n$ & $\Omega(n)$ & $n$ & $\Omega(n)$ & $n$ & $\Omega(n)$ \\ \hline\hline
    \textbf{1} & 1     & \textbf{41} & 8     & \textbf{81} & 92    & \textbf{121} & 21    & \textbf{161} & 53    & \textbf{201} & 53    & \textbf{241} & 20    & \textbf{281} & 16    & \textbf{321} & 27    & \textbf{361} & 43 \\
    \hline
    \textbf{2} & 1     & \textbf{42} & 188   & \textbf{82} & 25    & \textbf{122} & 37    & \textbf{162} & 1224  & \textbf{202} & 28    & \textbf{242} & 128   & \textbf{282} & 188   & \textbf{322} & 360   & \textbf{362} & 55 \\
    \hline
    \textbf{3} & 2     & \textbf{43} & 8     & \textbf{83} & 4     & \textbf{123} & 55    & \textbf{163} & 10    & \textbf{203} & 81    & \textbf{243} & 345   & \textbf{283} & 8     & \textbf{323} & 103   & \textbf{363} & 289 \\
    \hline
    \textbf{4} & 3     & \textbf{44} & 61    & \textbf{84} & 1397  & \textbf{124} & 119   & \textbf{164} & 121   & \textbf{204} & 1863  & \textbf{244} & 179   & \textbf{284} & 119   & \textbf{324} & 15934 & \textbf{364} & 5147 \\
    \hline
    \textbf{5} & 3     & \textbf{45} & 140   & \textbf{85} & 60    & \textbf{125} & 58    & \textbf{165} & 670   & \textbf{205} & 93    & \textbf{245} & 450   & \textbf{285} & 997   & \textbf{325} & 659   & \textbf{365} & 139 \\
    \hline
    \textbf{6} & 7     & \textbf{46} & 13    & \textbf{86} & 25    & \textbf{126} & 2099  & \textbf{166} & 13    & \textbf{206} & 25    & \textbf{246} & 380   & \textbf{286} & 546   & \textbf{326} & 31    & \textbf{366} & 558 \\
    \hline
    \textbf{7} & 4     & \textbf{47} & 4     & \textbf{87} & 41    & \textbf{127} & 12    & \textbf{167} & 4     & \textbf{207} & 177   & \textbf{247} & 153   & \textbf{287} & 109   & \textbf{327} & 81    & \textbf{367} & 8 \\
    \hline
    \textbf{8} & 10    & \textbf{48} & 1033  & \textbf{88} & 334   & \textbf{128} & 2989  & \textbf{168} & 12494 & \textbf{208} & 3256  & \textbf{248} & 658   & \textbf{288} & 218905 & \textbf{328} & 694   & \textbf{368} & 2030 \\
    \hline
    \textbf{9} & 7     & \textbf{49} & 21    & \textbf{89} & 8     & \textbf{129} & 53    & \textbf{169} & 43    & \textbf{209} & 79    & \textbf{249} & 27    & \textbf{289} & 31    & \textbf{329} & 53    & \textbf{369} & 373 \\
    \hline
    \textbf{10} & 10    & \textbf{50} & 79    & \textbf{90} & 1581  & \textbf{130} & 457   & \textbf{170} & 411   & \textbf{210} & 8339  & \textbf{250} & 558   & \textbf{290} & 457   & \textbf{330} & 8339  & \textbf{370} & 679 \\
    \hline
    \textbf{11} & 4     & \textbf{51} & 35    & \textbf{91} & 97    & \textbf{131} & 8     & \textbf{171} & 283   & \textbf{211} & 16    & \textbf{251} & 8     & \textbf{291} & 83    & \textbf{331} & 16    & \textbf{371} & 81 \\
    \hline
    \textbf{12} & 32    & \textbf{52} & 91    & \textbf{92} & 61    & \textbf{132} & 1397  & \textbf{172} & 119   & \textbf{212} & 91    & \textbf{252} & 23526 & \textbf{292} & 180   & \textbf{332} & 61    & \textbf{372} & 2745 \\
    \hline
    \textbf{13} & 6     & \textbf{53} & 6     & \textbf{93} & 53    & \textbf{133} & 99    & \textbf{173} & 6     & \textbf{213} & 53    & \textbf{253} & 53    & \textbf{293} & 6     & \textbf{333} & 442   & \textbf{373} & 12 \\
    \hline
    \textbf{14} & 13    & \textbf{54} & 232   & \textbf{94} & 13    & \textbf{134} & 25    & \textbf{174} & 284   & \textbf{214} & 13    & \textbf{254} & 37    & \textbf{294} & 3327  & \textbf{334} & 13    & \textbf{374} & 467 \\
    \hline
    \textbf{15} & 21    & \textbf{55} & 41    & \textbf{95} & 61    & \textbf{135} & 854   & \textbf{175} & 353   & \textbf{215} & 81    & \textbf{255} & 1051  & \textbf{295} & 41    & \textbf{335} & 81    & \textbf{375} & 1464 \\
    \hline
    \textbf{16} & 37    & \textbf{56} & 334   & \textbf{96} & 6719  & \textbf{136} & 442   & \textbf{176} & 3030  & \textbf{216} & 26202 & \textbf{256} & 14044 & \textbf{296} & 766   & \textbf{336} & 126762 & \textbf{376} & 334 \\
    \hline
    \textbf{17} & 5     & \textbf{57} & 40    & \textbf{97} & 12    & \textbf{137} & 8     & \textbf{177} & 27    & \textbf{217} & 125   & \textbf{257} & 9     & \textbf{297} & 1063  & \textbf{337} & 20    & \textbf{377} & 133 \\
    \hline
    \textbf{18} & 42    & \textbf{58} & 19    & \textbf{98} & 128   & \textbf{138} & 188   & \textbf{178} & 25    & \textbf{218} & 37    & \textbf{258} & 366   & \textbf{298} & 19    & \textbf{338} & 262   & \textbf{378} & 20441 \\
    \hline
    \textbf{19} & 6     & \textbf{59} & 4     & \textbf{99} & 177   & \textbf{139} & 8     & \textbf{179} & 4     & \textbf{219} & 82    & \textbf{259} & 153   & \textbf{299} & 81    & \textbf{339} & 69    & \textbf{379} & 16 \\
    \hline
    \textbf{20} & 47    & \textbf{60} & 1103  & \textbf{100} & 563   & \textbf{140} & 2142  & \textbf{180} & 17888 & \textbf{220} & 2142  & \textbf{260} & 3590  & \textbf{300} & 24672 & \textbf{340} & 3275  & \textbf{380} & 3181 \\
    \hline
    \textbf{21} & 27    & \textbf{61} & 12    & \textbf{101} & 9     & \textbf{141} & 27    & \textbf{181} & 18    & \textbf{221} & 119   & \textbf{261} & 275   & \textbf{301} & 125   & \textbf{341} & 145   & \textbf{381} & 79 \\
    \hline
    \textbf{22} & 13    & \textbf{62} & 25    & \textbf{102} & 243   & \textbf{142} & 25    & \textbf{182} & 658   & \textbf{222} & 421   & \textbf{262} & 25    & \textbf{302} & 37    & \textbf{342} & 3168  & \textbf{382} & 25 \\
    \hline
    \textbf{23} & 4     & \textbf{63} & 187   & \textbf{103} & 8     & \textbf{143} & 81    & \textbf{183} & 81    & \textbf{223} & 8     & \textbf{263} & 4     & \textbf{303} & 61    & \textbf{343} & 113   & \textbf{383} & 4 \\
    \hline
    \textbf{24} & 172   & \textbf{64} & 657   & \textbf{104} & 514   & \textbf{144} & 21451 & \textbf{184} & 334   & \textbf{224} & 13299 & \textbf{264} & 12494 & \textbf{304} & 3027  & \textbf{344} & 658   & \textbf{384} & 319416 \\
    \hline
    \textbf{25} & 13    & \textbf{65} & 67    & \textbf{105} & 670   & \textbf{145} & 67    & \textbf{185} & 100   & \textbf{225} & 2096  & \textbf{265} & 67    & \textbf{305} & 133   & \textbf{345} & 670   & \textbf{385} & 1318 \\
    \hline
    \textbf{26} & 19    & \textbf{66} & 147   & \textbf{106} & 19    & \textbf{146} & 37    & \textbf{186} & 366   & \textbf{226} & 31    & \textbf{266} & 672   & \textbf{306} & 2683  & \textbf{346} & 19    & \textbf{386} & 43 \\
    \hline
    \textbf{27} & 25    & \textbf{67} & 8     & \textbf{107} & 4     & \textbf{147} & 289   & \textbf{187} & 69    & \textbf{227} & 4     & \textbf{267} & 55    & \textbf{307} & 12    & \textbf{347} & 4     & \textbf{387} & 369 \\
    \hline
    \textbf{28} & 61    & \textbf{68} & 77    & \textbf{108} & 2219  & \textbf{148} & 135   & \textbf{188} & 61    & \textbf{228} & 2071  & \textbf{268} & 119   & \textbf{308} & 2715  & \textbf{348} & 2157  & \textbf{388} & 181 \\
    \hline
    \textbf{29} & 6     & \textbf{69} & 27    & \textbf{109} & 12    & \textbf{149} & 6     & \textbf{189} & 1225  & \textbf{229} & 12    & \textbf{269} & 6     & \textbf{309} & 53    & \textbf{349} & 12    & \textbf{389} & 6 \\
    \hline
    \textbf{30} & 147   & \textbf{70} & 281   & \textbf{110} & 281   & \textbf{150} & 2124  & \textbf{190} & 415   & \textbf{230} & 281   & \textbf{270} & 14283 & \textbf{310} & 549   & \textbf{350} & 4128  & \textbf{390} & 14253 \\
    \hline
    \textbf{31} & 8     & \textbf{71} & 8     & \textbf{111} & 61    & \textbf{151} & 12    & \textbf{191} & 8     & \textbf{231} & 839   & \textbf{271} & 16    & \textbf{311} & 8     & \textbf{351} & 1949  & \textbf{391} & 69 \\
    \hline
    \textbf{32} & 151   & \textbf{72} & 2311  & \textbf{112} & 2030  & \textbf{152} & 496   & \textbf{192} & 45694 & \textbf{232} & 514   & \textbf{272} & 2872  & \textbf{312} & 20014 & \textbf{352} & 13299 & \textbf{392} & 7641 \\
    \hline
    \textbf{33} & 27    & \textbf{73} & 12    & \textbf{113} & 10    & \textbf{153} & 238   & \textbf{193} & 14    & \textbf{233} & 8     & \textbf{273} & 1611  & \textbf{313} & 16    & \textbf{353} & 12    & \textbf{393} & 53 \\
    \hline
    \textbf{34} & 16    & \textbf{74} & 28    & \textbf{114} & 277   & \textbf{154} & 360   & \textbf{194} & 37    & \textbf{234} & 3267  & \textbf{274} & 25    & \textbf{314} & 37    & \textbf{354} & 188   & \textbf{394} & 28 \\
    \hline
    \textbf{35} & 41    & \textbf{75} & 185   & \textbf{115} & 41    & \textbf{155} & 81    & \textbf{195} & 1142  & \textbf{235} & 41    & \textbf{275} & 395   & \textbf{315} & 8717  & \textbf{355} & 81    & \textbf{395} & 81 \\
    \hline
    \textbf{36} & 284   & \textbf{76} & 90    & \textbf{116} & 91    & \textbf{156} & 2157  & \textbf{196} & 904   & \textbf{236} & 61    & \textbf{276} & 1397  & \textbf{316} & 119   & \textbf{356} & 121   & \textbf{396} & 22162 \\
    \hline
    \textbf{37} & 9     & \textbf{77} & 53    & \textbf{117} & 291   & \textbf{157} & 12    & \textbf{197} & 9     & \textbf{237} & 53    & \textbf{277} & 12    & \textbf{317} & 6     & \textbf{357} & 1152  & \textbf{397} & 18 \\
    \hline
    \textbf{38} & 19    & \textbf{78} & 284   & \textbf{118} & 13    & \textbf{158} & 25    & \textbf{198} & 1989  & \textbf{238} & 467   & \textbf{278} & 25    & \textbf{318} & 284   & \textbf{358} & 13    & \textbf{398} & 37 \\
    \hline
    \textbf{39} & 41    & \textbf{79} & 8     & \textbf{119} & 69    & \textbf{159} & 41    & \textbf{199} & 12    & \textbf{239} & 8     & \textbf{279} & 369   & \textbf{319} & 81    & \textbf{359} & 4     & \textbf{399} & 1601 \\
    \hline
    \textbf{40} & 262   & \textbf{80} & 1646  & \textbf{120} & 10130 & \textbf{160} & 11256 & \textbf{200} & 4973  & \textbf{240} & 107165 & \textbf{280} & 19935 & \textbf{320} & 80768 & \textbf{360} & 257731 & \textbf{400} & 51694 \\\hline
    \end{tabular}
  \label{tab:NumberSRings}
    }
\end{table}

In terms of enumerating Schur rings over $G$, Schurian rings are somewhat problematic because they depend upon subgroups of $\text{Sym}(G)$ which are external to $G$ and do not lead to recursive methods. Instead, this paper will employ an alternative classification of Schur rings given by Leung and Man \cite{LeungII, LeungI}, which were coined \emph{traditional Schur rings} in \cite{InfiniteI} (see \secref{sec:Schur}). In the 1990's, Leung, Man, and others (e.g., \cite{Leung90, Muzychuk93, Muzychuk94, Muzychuk98}) investigated how cyclic Schur rings could be classified internally, namely using subgroups and quotients. This internal classification\footnote{It should be noted that the classification of Schurian groups does not necessarily inhibit enumeration. Lang in \cite{Lang18} using the classificiation of Schurian abelian groups in \cite{Ponomarenko16} to enumerate supercharacter theories over groups of the form $\z_p\times \z_2\times \z_2$. Hendrickson \cite{Hendrickson} showed that supercharacter theories of a group are equivalent to the central Schur rings of that group. Much work of late has been made to classify supercharacter theories,  e.g., \cite{Ash18, Ash19, Lewis17} which is parallel to these efforts to classify and enumerate Schur rings.} of Schur rings better allows for recursion in our enumeration and will be the foundation of our technique. 

The general technique of counting Schur rings over cyclic groups comes from the following basic strategy. By the Fundamental Theorem of Schur Rings over Cyclic Groups proven by Leung and Man (see \thmref{thm:fundamental}), all Schur rings over cyclic groups are traditional. The MAGMA code recursively enumerates all traditional Schur rings over the subquotients of $\z_n$ and checks for \emph{wedge-compatibility}. \emph{Indecomposable} Schur rings are classified and wedge-compatiability is considered to develop recursive relations on the number of Schur rings over subquotients of $\z_n$.  From here, wedge and direct products of Schur rings are considered using this recursion. In regard to proving counting formulas, a similar approach is taken. As this manuscript is primarily survey in nature, many details are omitted for the sake of brevity, but great efforts have also been made to include sufficient details to make this enumeration technique self-contained. The reader should consult the extensive bibliography for further details.

The author wants to personally thank Stephen P. Humphries for his very appreciated technical assistance rendered for the enumeration of $\z_{288}$ and all orders above $300$. The author would like also to thank Brent Kerby whose code in \cite{Kerby} offered inspiration of the code used herein. Finally, the author wants to thank the anonymous referee whose helpful comments significantly improved this manuscript.

\section{Traditional Schur Rings}\label{sec:Schur}
We begin with examples of Schur rings that are important for classifying Schur rings over cyclic groups.  

Given a finite group $G$, the partition of the group into singletons, that is, $\{\{g\}\mid g\in G\}$, affords a Schur ring structure called the \emph{discrete Schur ring}. Note that this is just the group algebra $\QG$ itself. As the coefficient ring will play little role here, we will abuse notation by using $G$ to both denote the group $G$ and the group ring $\QG$, when the context is clear. Another example is the partition $\{1, G\smallsetminus 1\}$. We call this Schur ring the \emph{trivial Schur ring} and denote it as $G^0$. For any finite group, the discrete and trivial Schur rings are always available. In the case of $\z_1$ and $\z_2$, they are one and the same (in fact, there is only one Schur ring over both $\z_1$ and $\z_2$). Otherwise, they are distinct.

Let $\Aut(G)$ be the automorphism group of $G$, and let $\H\le \Aut(G)$. Then $\H$ partitions $G$ according to the automorphic action of $\H$ on $G$, via the $\H$-orbits. This Schur ring is called an \emph{automorphic Schur ring}\footnote{Automorphic Schur ring are also commonly referred to as \emph{orbit Schur rings}, as was the case in \cite{Counting}} and is denoted $G^\H$. Note that the discrete Schur ring is automorphic where $\H=1$. The center of the group algebra, namely $Z(\QG)$, is likewise an automorphic Schur ring associated to the subgroup of inner automorphisms $\text{Inn}(G)$.

Let $H\subseteq G$ and let $\S$ be a Schur ring over $G$. We say that 
$H$ is an \emph{$\S$-subset} 
if $H$ is a union of primitive sets of $\S$. 
We say 
$H$ is an \emph{$\S$-subgroup} 
if $H$ is both a subgroup of $G$ and an $\S$-subset. Note that the family of all $\S$-subsets forms a sublattice of the power set of $G$. Likewise, if $\L(G)$ denotes the lattice of all subgroups of $G$, then the family of all $\S$-subgroups form a sublattice of $\L(G)$. 

We say a Schur ring is \emph{primitive} if the only $\S$-subgroups are $1$ and $G$. The trivial Schur ring is primitive and all Schur rings over $\z_p$, where $p$ is a prime, are necessarily primitive. Wielandt \cite{Wielandt64} showed that these are the only primitive Schur rings over cyclic groups.

For a Schur ring $\S$ and an $\S$-subgroup $H$, let $\S_H:=\S\cap H$ be the Schur ring over $H$ associated to the partition of $H$ given by 
\[\D(\S_H) = \{ C\in \D(\S) \mid C\subseteq H\}.\] We say that a Schur ring $\T$ is a \emph{Schur subring} of $\S$ if $\T=\S_H$ for some $\S$-subgroup $H$. As the subgroups of $\z_n$ are uniquely determined by its order, if $\S$ is a Schur ring over $\z_n$ and $H$ is an $\S$-subgroup, then $\S_{|H|}:=\S_H$.

For automorphic Schur rings $\S=G^\H$, the $\S$-subgroups are exactly the $\H$-invariant subgroups of $G$, which includes all characteristic subgroups. Hence, if $\S$ is an automorphic Schur ring over $G$ with $\S$-subgroup $K$, then $S_K = K^\H$ which is necessarily automorphic itself.


For any group homomorphism $\varphi: G \to H$, this map lifts to a map on group algebras $\varphi : \QG \to \QH$ by the rule $\varphi\left(\sum_{g} \alpha_gg\right) = \sum_{g} \alpha_g\varphi(g)\in \QH$. Let $\S$ be a Schur ring over $G$. If $\ker\varphi$ is an $\S$-subgroup, then $\varphi(\S)$ is a Schur ring over $\varphi(G)$. In particular, the associated partition of $\varphi(\S)$ is \[\D(\varphi(\S)) = \{\varphi(C) \mid C\in \D(\S)\}.\]  If $\T$ is a Schur ring over $H$ such that $\varphi(G)$is a $\T$-subgroup and $\T_{\varphi(G)}=\varphi(\S)$, then $\varphi : \S\to \T$ induces a homomorphism between Schur rings. In the case that $\varphi : K\hookrightarrow
 G$ is the natural inclusion map, then $\varphi : \S_K \hookrightarrow
 \S$ becomes the inclusion map on Schur rings.

Suppose $G$ is a direct product of two groups, say $G=H\times K$. Let $\S$ and $\T$ be Schur rings over $H$ and $K$, respectively. Then the \emph{direct product}\footnote{These Schur rings are also known as tensor products, as in \cite{Tamaschke70}.} of Schur rings, denoted by $\S\times\T$, is given by the partition 
\[\D(\S\times\T) = \{CD\mid C\in \D(\S), D\in \D(\T)\},\] where $CD$ is viewed as a subset of $G$. By construction, $H$ and $K$ are both $\S\times \T$-subgroups. In fact, $(\S\times\T)_H = \S$ and $(\S\times \T)_K=\T$.  The direct product is the smallest Schur ring over $G$ with this property. The usual group projections $\pi_1 : H\times K\to H$ and $\pi_2: H\times K\to K$ induce Schur ring projections $\pi_1 : \S\times \T\to \S$ and $\pi_2:\S\times \T\to \T$.


If ${h}\in \D(\S)$ for some Schur ring $\S$ over $G$, then $hC\in \D(\S)$ for all $C\in \D(\S)$. Likewise, the collection of singletons in $\D(\S)$ forms an $\S$-subgroup $H$ of $G$. Of course, $\S_H=H$, that is, $\S_H$ is discrete. Furthermore, if $G=H\times K$ and $\S$ is a Schur ring over $G$ such that $H,K$ are $\S$-subgroups and $\S_H$ is discrete, then $\S=\S_H\times \S_K=H\times \S_K$.\footnote{This fact was proven in \cite{Ponomarenko16} in the case of abelian groups, but the commutativity assumption can be dropped with no loss.}

For a Schur ring $\S$, we say that an $\S$-subgroup $K$ is \emph{normal} if $K\trianglelefteq G$. Let $\varphi : G\to G/K$ be the natural map. Then we call the Schur ring $\varphi(\S)$ the \emph{quotient Schur ring} of $\S$ over $G/K$ and denote it as $\S/K$. 

Conversely, if $\varphi:G\to H$ is a group homomorphism and $\T$ is a Schur ring over $H$, then define \[\varphi^{-1}(\T) := \Span\{\varphi^{-1}(C) \mid C\in \D(\T)\}.\] It is elementary to show that $\varphi^{-1}(\T)$ is closed under $*$ and multiplication, but, the class containing the identity is $\ker\varphi$, which, in general, is not $1$. As such, we call $\varphi^{-1}(\T)$ a \emph{pre-Schur ring}. Note that all primitive sets in $\varphi^{-1}(\T)$ are unions of cosets of $\ker\varphi$. 

We say that $[K,H]$ is an \emph{$\S$-section} if $1\le K\le H\le G$, $K\trianglelefteq G$, and $H, K$ are $\S$-subgroups. We say that a section $[K,H]$ is \emph{proper} if $1<K\le H< G$, and we say that $[K,H]$ is \emph{trivial} if $K=H$.  As all subgroups of $\z_n$ are normal and uniquely determined by their orders, we shall denote the section $[\z_d, \z_e]$ simply as $[d,e]$. 

We say that $\S$ is \emph{wedge-decomposable} if there exists a proper $\S$-section $[K,H]$ such that for every $\S$-class $C$ either $C\subseteq H$ or $C$ is a union of cosets of $K$. Otherwise, $\S$ is \emph{wedge-indecomposable}. 

Let $U=[K,H]$ be a proper section of $G$. We say a pair of Schur rings $\S$ and $\T$ over $H$ and $G/K$, respectively, are \emph{wedge-compatible} if $K$ is an  $\S$-subgroup, $H/K$ is a $\T$-subgroup, and $\S/K = \T_{H/K}$. Then the \emph{wedge product}\footnote{The wedge product was first introduced by Leung and Man \cite{LeungI}. In \cite{Ponomarenko02}, Evdokimov and Ponomarenko independently introduce the similar notion of a \emph{(generalized) wreath product}.} of two wedge-compatible Schur rings $\S$ and $\T$, denoted $\S\wedge_U \T$, is given by the rule $\S\wedge_U\T := \S + \varphi^{-1}(\T)$, where $\varphi : G \to G/K$ is the natural map. Note that 
\[\D(\S\wedge_U\T) = \D(\S) \cup \{\varphi^{-1}(C) \subseteq G\smallsetminus H \mid C\in \D(\T)\}.\] Note that by construction $U$ is an  $\S\wedge_U \T$-section, $(\S\wedge_U \T)_H = \S$, and $(\S\wedge_U\T)/K = \T$. The wedge product is, in fact, the smallest Schur ring over $G$ with this property. Furthermore, Schur rings over $G$ can be factored as a wedge product if and only if the Schur ring is wedge-decomposable.

With a trivial section wedge-compatibility is automatic. Thus, if $G$ is any group extension of $Q$ by $N$, then we may form the wedge product $S\wedge_{[N,N]} \T$ over $G$ where $\S$ and $\T$ are any Schur rings over $N$ and $Q$, respectively. In this case, the subscript is omitted and the product is denoted simply as $\S\wedge \T$. 

Finally, we say that a Schur ring over $G$ is \emph{traditional} if the Schur ring can be recursively built using automorphic and trivial Schur rings over sections of $G$ via the operations of direct products and wedge products. In other words, a Schur ring is traditional if it belongs to one of four Schur ring families: trivial Schur rings, automorphic Schur rings, direct products, or wedge products. Leung and Man show in \cite{LeungII} the following important classification of Schur rings over cyclic groups.

\begin{Thm}[The Fundamental Theorem of Schur Rings over Cyclic Groups]\label{thm:fundamental} All Schur rings over the finite\footnote{In \cite{InfiniteI}, this result is extended to the infinite cyclic group, among others.} cyclic group $\z_n$ are traditional.
\end{Thm}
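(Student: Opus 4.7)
The plan is to proceed by strong induction on $n = |\z_n|$. The base case $n=1$ is immediate, since the unique Schur ring over $\z_1$ is at once trivial and automorphic. For the inductive step, fix a Schur ring $\S$ over $\z_n$ with $n \ge 2$ and assume the theorem for all cyclic groups of strictly smaller order. The first easy subcase is when $\S$ is wedge-decomposable: by definition there is a proper $\S$-section $[K,H]$ with $\S = \S_H \wedge_{[K,H]} \S/K$, and because $[K,H]$ is proper both $H$ and $\z_n/K$ are cyclic of order strictly smaller than $n$. The inductive hypothesis gives that $\S_H$ and $\S/K$ are traditional, so $\S$ is a wedge product of traditional Schur rings and hence traditional. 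Equally routine is the wedge-indecomposable primitive case: Wielandt's classification (recalled in \secref{sec:Schur}) forces either $\S = \z_n^0$, which is already traditional by definition, or $n = p$ a prime. In the latter case, a short argument leveraging Burnside's theorem on transitive permutation groups of prime degree shows that the partition of $\z_p \smallsetminus 1$ determined by $\S$ coincides with the orbit decomposition of some subgroup of $\Aut(\z_p) \cong \z_{p-1}$, so $\S$ is automorphic.

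The real work, and the main obstacle, is the wedge-indecomposable imprimitive case: here $\S$ admits a nontrivial proper $\S$-subgroup but no proper $\S$-section supports a wedge factorization. The target is to produce a nontrivial direct-product decomposition $\S = \S_H \times \S_K$ corresponding to a coprime splitting $\z_n = H \times K$ into $\S$-subgroups, at which point the inductive hypothesis applied to the two strictly smaller factors finishes the argument. The strategy is to select a minimal nontrivial $\S$-subgroup $N$ and a maximal proper $\S$-subgroup $M$ in the lattice of $\S$-subgroups and to study their placement inside the divisor lattice of $n$; when $N \le M$, the section $[N,M]$ is proper, and wedge-indecomposability forces at least one $\S$-class outside $M$ to fail to be a union of cosets of $N$. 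This rigidity, combined with the Chinese Remainder decomposition $\z_n \cong \prod_p \z_{p^{a_p}}$ and the total order on the $p$-local subgroup lattices of $\z_n$, is strong enough to rule out any configuration in which $M$ and $N$ intersect a common prime-power factor nontrivially, forcing $M$ and $N$ to be coprime complementary subgroups and thereby yielding the direct-product structure. Executing this coset-orbit bookkeeping rigorously and exhausting all intermediate configurations is exactly the delicate internal analysis carried out by Leung and Man in \cite{LeungII, LeungI}, and it is the principal technical difficulty of the theorem.
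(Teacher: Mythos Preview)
The paper does not prove this theorem; it simply attributes it to Leung and Man \cite{LeungII, LeungI} and uses it as a black box for the enumeration that follows. So there is no proof in the paper to compare against, and your proposal stands as an independent sketch.

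Your inductive reductions for the wedge-decomposable and primitive cases are fine, but there is a genuine gap in the wedge-indecomposable imprimitive case. You declare the target to be a nontrivial direct-product splitting $\S = \S_H \times \S_K$ over a coprime factorization $\z_n = H \times K$, but this target is unreachable in general. Take $n = p^k$ with $k \ge 2$ and $\S = \z_{p^k}$ the discrete ring (or any top-layer automorphic ring, in the language of \secref{sec:counting}): every subgroup of $\z_{p^k}$ is characteristic and hence an $\S$-subgroup, so $\S$ is imprimitive; the $\S$-classes are singletons, so no class outside any proper $H$ is a union of cosets of a nontrivial $K$, so $\S$ is wedge-indecomposable; yet $\z_{p^k}$ admits no nontrivial coprime factorization, so no direct product is available. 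The Leung--Man classification has \emph{two} possible destinations in this case, not one: an indecomposable imprimitive $\S$ is either automorphic \emph{or} a nontrivial direct product, and the heart of \cite{LeungII, LeungI} is precisely the character-theoretic analysis that separates these two outcomes. Your sketch never opens the automorphic branch, so it cannot close.

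There is also a smaller inconsistency in the $M,N$ argument: you write ``when $N \le M$ \ldots\ forcing $M$ and $N$ to be coprime complementary subgroups,'' but if $N \le M$ with both nontrivial proper they are certainly not coprime complements. What the lattice step can give, when it applies, is that indecomposability forces $N \not\le M$, whence $MN = \z_n$ and $M \cap N = 1$ by maximality and minimality in the $\S$-subgroup lattice; but even then, having complementary $\S$-subgroups $M,N$ is far from establishing $\S = \S_M \times \S_N$, and that implication is itself a nontrivial part of the Leung--Man analysis rather than bookkeeping.
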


\section{Counting Schur Rings}\label{sec:counting}
In order to compute $\Omega(n)$, we use the above Fundamental Theorem and count the number of Schur rings of each of the four families of traditional rings, with particular attention on what conditions cause them to overlap. 

For any $n$, there is exactly one trivial Schur ring. As it is primitive, the trivial Schur rings $\z_n^0$ will never be wedge-decomposable nor factorable as a direct product, since both products require a proper subgroup which is absent for primitive Schur rings. In the case that $n=p$, $\z_p^0 = \z_p^{\Aut(\z_p)}$. If $n$ is not prime, then elements of different orders in $\z_n^0$ are fused together, a feature impossible in any automorphic Schur ring. Hence, the trivial Schur ring is automorphic if and only if the group has prime order.  Thus, the trivial Schur ring will contribute a single count to $\Omega(n)$ and is distinct from the other three traditional families for composite orders. 

Let $n=ab$ be a unitary factorization, that is, $\gcd(a,b)=1$. The number of direct product Schur rings over $\z_{n}$ with respect to this factorization will be $\Omega(a)\Omega(b)$, since each such Schur ring has the form $\S\times\T$, where $\S$ and $\T$ are Schur rings over $\z_a$ and $\z_b$, respectively. One can enumerate all direct product Schur rings over $\z_n$ by enumerating all unitary factorizations of $n$. As the direct product is an associative, commutative operator on groups, a typical inclusion-exclusion argument is necessary when $n$ has at least three prime divisors. For example, the number of direct product Schur rings over $\z_{pqr}$ would be $\Omega(p)\Omega(qr) + \Omega(q)\Omega(pr) + \Omega(r)\Omega(pq) - 2
\Omega(p)\Omega(q)\Omega(r).$

 Let $H$ and $K$ be groups and let $\H\le \Aut(H)$ and $\K\le \Aut(K)$. If $G=H\times K$, then we may naturally view $\H\times \K$ as a subgroup of $\Aut(G)$ by the following rule. If $\sigma\in \H$ and $\tau\in \K$, then define the map $\sigma\times \tau : H\times K \to H\times K$ as $(h,k)^{\sigma\times \tau} = (h^\sigma, k^\tau)$. If follows that $\S\times \T$ is automorphic if and only if $\S$ and $\T$ are automorphic.


The automorphic Schur rings over $\z_n$ are in Galois correspondence with the subgroups of $\Aut(\z_n)$.\footnote{See Equation (3.1) in \cite{Counting}.}  Let $U(n)$ denote the set of units of the finite ring $\Z_n$, that is, integers modulo $n$. Thus, $U(n)$ is the set of integers coprime to $n$. It is clear that $\Aut(\z_n)\cong U(n)$ and 
\[U(n) = \prod_{i=1}^r U(p_i^{e_i}),\] where $n=\prod_{i=1}^r p_i^{e_i}$ is the prime factorization of $n$. Hence, counting automorphic Schur rings over $\z_n$ is equivalent to computing $|\L(U(n))|$.

Note, $U(n)$ is cyclic if and only if $n=p^k$ or $n=2p^k$. In this case $\L(U(n))$ decomposes into a tower of sublattices, which we call \emph{layers}, where each layer is lattice-isomorphic to $U(p)$ and sits on top of the previous layer by degree $p$ extensions. Additionally, $U(p)$ is isomorphic to the divisor lattice associated to $p-1$. If $x$ is the number of divisors of $p-1$, then the number of automorphic Schur rings over $\z_{p^k}$ or $\z_{2p^k}$ is  $(k-1)x$. 


The general problem of enumerating automorphic Schur rings over $\z_n$ is much more difficult, as every abelian group is a subgroup of $U(n)$ for some sufficiently large $n$. The problem of counting the number of subgroups of an arbitrary abelian group is a well studied problem in the literature, for example \cite{OhJuMok}, \cite{Petrillo}, and \cite{Toth14}, which essentially all derive from a theorem of Goursat \cite{Goursat}. Our consideration of this problem will follow the method of C\u{a}lug\u{a}reanu \cite{Calug}. We say that two sections $U=[K,H]$  and $U'=[K',H']$ over $G$ and $G'$, respectively, are \emph{isomorphic}, if $H/K\cong H'/K'$. If $G=H\times K$, we say a subgroup $D$ is \emph{diagonal} if $D\in \L(G)\smallsetminus (\L(H)\times \L(K))$. As shown in \cite{Calug}, diagonal subgroups of $G=H\times K$ correspond exactly to automorphisms between isomorphic, non-trivial sections of the lattices $\L(H)$ and $\L(K)$. If $\gcd(|H|, |K|)=1$ then it follows that $\L(H\times K) \cong \L(H)\times \L(K)$. Thus, it suffices to consider the case that $H$ and $K$ are both cyclic $p$-groups.  We illustrate C\u{a}lug\u{a}reanu's technique below.

\begin{Exam}\label{exam:4x4} In $\z_4\times\z_4$ there are 6 diagonal subgroups. To see this, note that the sections in $\z_4$ are $[1,2]$, $[2,4]$, and $[1,4]$. The first two sections are isomorphic to $\z_2$. Since there is only one automorphism over $\z_2$, $1\cdot2\cdot2=4$ of the diagonal subgroups arise from the combinations of $[1,2]$ and $[2,4]$. The last $2\cdot1\cdot1=2$ come from there being two automorphisms over $\z_4$ and the single combination of $[1,4]$ and itself. Given that $|\L(\z_4)|=3$, this shows that $|\L(\z_4\times\z_4)| = 3^2+6=15$.
\end{Exam}

\begin{Lem}[\cite{Calug}]\label{lem:latticeabelian} The number of subgroups of $\z_{p^k}\times \z_{p^\ell}$ is given as
\[\left|\L\middle(\z_{p^k}\times \z_{p^\ell}\middle)\right| = \sum_{j=0}^{\min(k,\ell)} \phi(p^j)(k-j+1)(\ell-j+1),\] where $\phi$ denotes Euler's totient function.
\end{Lem}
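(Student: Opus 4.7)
The plan is to apply Goursat's lemma, which classifies the subgroups of a direct product $H \times K$ as follows: each subgroup $L \le H \times K$ corresponds bijectively to a quintuple $(A, B, C, D, \psi)$ where $B \trianglelefteq A \le H$, $D \trianglelefteq C \le K$, and $\psi : A/B \to C/D$ is a group isomorphism, via the rule
\[
L = \{(a,c) \in A \times C \mid \psi(aB) = cD\}.
\]
Here $A$ and $C$ are just the projections of $L$ onto the two factors, and $B, D$ are the kernels of the two projections restricted to $L$. I would cite Goursat \cite{Goursat} (or Călugăreanu \cite{Calug}, as the paper already does) for this fact rather than reproving it.

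Next I would specialize to $H = \z_{p^k}$ and $K = \z_{p^\ell}$. Since these are abelian, the normality requirements are automatic, so any pair of nested subgroups furnishes a section. Because subgroups of a cyclic $p$-group are totally ordered by inclusion and uniquely determined by their order, a section $[B,A]$ of $\z_{p^k}$ with $|A/B| = p^j$ is specified by choosing $|B| = p^b$ with $0 \le b \le k-j$, giving exactly $k-j+1$ choices. Symmetrically, there are $\ell-j+1$ sections of $\z_{p^\ell}$ with quotient of order $p^j$.

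For each pair of sections with matching quotient order $p^j$, the common quotient is isomorphic to $\z_{p^j}$, and the number of Goursat isomorphisms between them equals
\[
|\Aut(\z_{p^j})| = |U(p^j)| = \phi(p^j).
\]
Summing the contributions over all possible values of $j$ in the range $0 \le j \le \min(k,\ell)$ yields
\[
\left|\L\bigl(\z_{p^k} \times \z_{p^\ell}\bigr)\right| = \sum_{j=0}^{\min(k,\ell)} \phi(p^j)(k-j+1)(\ell-j+1),
\]
which is the desired formula. As a sanity check matching \examref{exam:4x4}, at $p=k=\ell=2$ the sum is $1\cdot 3\cdot 3 + 1\cdot 2\cdot 2 + 2 \cdot 1 \cdot 1 = 15$; note that the $j=0$ summand $(k+1)(\ell+1)$ precisely counts the non-diagonal subgroups of the form $A \times C$, while the $j \ge 1$ terms count the diagonal subgroups, consistent with the discussion preceding the example.

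The only real obstacle is a clean invocation of Goursat's lemma; once we have the bijective classification of subgroups of $H \times K$ by tuples of sections and connecting isomorphisms, the rest is the elementary counting of sections and automorphisms in cyclic $p$-groups. Since the paper already cites \cite{Calug} and \cite{Goursat} and states the diagonal-subgroup correspondence explicitly, I would keep the proof short by leveraging that setup rather than re-deriving Goursat's lemma from scratch.
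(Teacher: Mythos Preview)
Your proof is correct. Note, however, that the paper does not actually supply a proof of this lemma: it is stated with the citation \cite{Calug} and left unproven, with only the informal description preceding \examref{exam:4x4} (``diagonal subgroups of $G=H\times K$ correspond exactly to automorphisms between isomorphic, non-trivial sections of the lattices $\L(H)$ and $\L(K)$'') and the verification after the lemma that the formula recovers $|\L(\z_4\times\z_4)|=15$. Your argument via Goursat's lemma is precisely the C\u{a}lug\u{a}reanu technique the paper is invoking, so there is nothing to contrast; your write-up simply fills in the details the paper delegates to the reference.
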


Note that for $\z_4\times \z_4$, we see that 
\[|\L(\z_4\times \z_4)| = 1(3)(3) + 1(2)(2) + 2(1)(1) = 15,\] which agrees with \examref{exam:4x4}. 

If $\S$ is wedge-decomposable with section $[K,H]$, then $\S$ is likewise decomposable for the section $[K',H]$ where $K'\trianglelefteq G$ and $K'\le K$. Thus, it is advantageous to choose $K$ to be a minimal normal subgroup to avoid unnecessary over-counting of the same wedge product that can be formed from distinct sections. In \cite{Counting}, the section $[p, p^j]$ was used, where $j$ was allowed to vary, for this very reason. For general $n$, when the subgroup $H$ has multiple prime divisors, there are multiple minimal subgroup choices for $K$ and an  inclusion-exclusion argument is again necessary.


When enumerating direct product Schur rings, it is important to observe that if $G=H\times K$ is a group and $\S=\S_H\times \S_K$ is a direct product Schur ring over $G$ such that $\S_H$ is wedge-decomposable, then $\S$ is itself wedge-decomposable.\footnote{This is immediate consequence of \cite[Theorem 4.1]{Ponomarenko13}, but an elementary argument using sections is also possible allowing for the assumption that $G$ is cyclic to be removed.} Hence, it suffices to count only those with wedge-indecomposable direct factors, as any complete enumeration of wedge products will contain the rest. The indecomposable Schur rings are necessarily then trivial, automorphic, or direct products of indecomposable factors. Hence, the trivial and indecomposable automorphic Schur rings will be the atomic building blocks for all higher Schur rings over cyclic groups. 

For $\z_{p^k}$, the indecomposable Schur rings include the trivial ring $\z_{p^k}^0$ and the top-layer automorphic rings,\footnote{Such Schur rings were considered in \cite{Ponomarenko02} using a different approach. In particular, they showed that a Schur ring over a cyclic group is indecomposable if and only if its radical is trivial.  In the case of orbit Schur rings, it is shown that this occurs exactly when the associated automorphic group's order is coprime to to $p$. Most arguments involving the representation $\omega$ in this manuscript could be alternatively argued using this radical.} that is, those automorphic Schur rings that correspond to the subgroups of $U(p)$ viewed as a subgroup of $U(p^k)$. To see this, consider the representation $\omega : \Q[\z_n] \to \Q(\zeta_n)$ afforded by the rule $z\mapsto \zeta_n$, where $\zeta_n$ is a complex, primitive $n$th root of unity and $\Q(\zeta_n)$ is the associated cyclotomic field.\footnote{Because of this representation by cyclotomic fields, automorphic Schur rings over a cyclic group are often called \emph{cyclotomic} in the literature. See \cite{Ponomarenko02, Muzychuk09, Ponomarenko16}.}  For a Schur ring $\S$, $\omega(\S)$ is necessarily a subfield of $\Q(\zeta_n)$ and, as $U(n)$ is the Galois group of $\Q(\zeta_n)$ over $\Q$, the correspondence between automorphic Schur rings and the subgroups of $U(n)$ is identical to the Galois correspondence of subfields of $\Q(\zeta_n)$ with subgroups of $U(n)$. Note that\footnote{See \cite[Equations (3.1)--(3.3)]{Counting}. Direct products were not considered here because $\z_{p^k}$ has not such factorization.} $\omega(\z_n^0) = \Q$, $\omega(\z_n^\H) = \Q(\zeta_n)^\H$, and 
$\omega(\S_H\wedge \S/K) = \omega(\S_H)$. Likewise, $\omega(\S_m) \subseteq \Q(\zeta_m)$. In particular, a wedge-decomposable ring cannot map into the \emph{top-layer} of $\Q(\zeta_n)$, those subfields of $\Q(\zeta_n)$ which are not subfields of $\Q(\zeta_d)$ for any proper divisor $d$ of $n$. This implies that those rings which do map to the top-layer are indecomposable, as claimed.

We can also consider the $\omega$-image of a direct product. Recall that $\S\times \T$ is the compositum ring of $\S$ and $\T$. As such, $\omega$ will map $\S\times\T$ onto the compositum field of $\omega(\S)$ and $\omega(\T)$ in $\Q(\zeta_n)$, that is, 
\begin{equation}\label{eq:omegaDirect} \omega(\S\times\T) = \omega(\S)\vee \omega(\T) \cong \omega(\S)\otimes_\Q\omega(\T).\end{equation}



As mentioned above, if $\S = \S_{H'}\wedge_{[K',H']} \S/K'$ is wedge-decomposable, then $\omega(\S) = \omega(\S_{H'})$. If $\S_{H'}$ is itself wedge-decomposable over the section $[K'',H'']$, then $\omega(\S) = \omega(\S_{H''})$. As these groups are finite, this process will eventually terminate, and there exists an $\S$-subgroup $H$ such that $\omega(\S)=\omega(\S_H)$ and $\S_H$ is indecomposable. Let $H$ be the maximal such subgroup. Then $\S_H = \z_{d}^\H\times \prod_{i=1}^k \z_{d_i}^0$, where any of the divisors $d_0, d_1, \ldots$ could be 1. This will be the unique, maximal wedge-indecomposable Schur subring of $\S$, which we call the \emph{wedge-core} of $\S$. Two Schur rings with distinct cores are necessarily distinct. If $\S$ is itself indecomposable, then it is equal to its own core. Every Schur ring $\S$ over a non-trivial cyclic group will have a non-trivial core as the core will contain all the Schur subrings over the minimal $\S$-subgroups. We will enumerate the possible cores of Schur rings to avoid situations when two different wedge decompositions give rise to the same partition of $\z_n$.

Let $\Omega(n, \S)$ denote the number of Schur rings over $\z_{n}$ that contain the Schur ring $\S$ as its wedge-core. We let $\Omega(n,d)$\footnote{Note that this notation has a slightly different meaning in \cite{Counting}, for which the present paper generalizes the situation considered in the former.} abbreviate $\Omega(n, \z_{d})$. If $\S\wedge_{[K,H]} \T$ is a wedge product, then it must be that $\S/K = \T_{H/K}$. If we keep $\S$ fixed, then $\T$ may be any Schur ring over $G/K$ so long as the Schur subring $\T_{H/K}$ is exactly $\S/K$. Of course, if $\S$ is an indecomposable Schur ring over $\z_{n}$, then $\Omega(n,\S)=1$.

\begin{Lem}\label{lem:primcount} If $\S$ is a primitive Schur ring over $\z_d$ and $d\mid n$, then $\Omega(n,\S)=\Omega(n/d)$.
\end{Lem}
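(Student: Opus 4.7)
The plan is to exhibit a bijection $\T\longmapsto \S\wedge_{[\z_d,\z_d]}\T$ between Schur rings $\T$ over $\z_{n/d}$ and Schur rings $\mathcal{R}$ over $\z_n$ having wedge-core $\S$, with inverse $\mathcal{R}\mapsto\mathcal{R}/\z_d$. Counting both sides then yields $\Omega(n,\S)=\Omega(n/d)$. Wedge-compatibility of the trivial section $[\z_d,\z_d]$ is automatic, so $\S\wedge_{[\z_d,\z_d]}\T$ is always a well-defined Schur ring over $\z_n$.

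For the forward direction, I would first establish a dichotomy: every subgroup of $\z_n$ which is an $\mathcal{R}'$-subgroup of a ring of the form $\mathcal{R}'=\S\wedge_{[\z_d,\z_d]}\T$ either lies inside $\z_d$ or contains $\z_d$. Indeed, any element outside $\z_d$ belongs to an $\mathcal{R}'$-class of the form $\varphi^{-1}(C)$ for some non-identity $\T$-class $C$, which is a union of $\z_d$-cosets by construction; hence any $\mathcal{R}'$-subgroup meeting the complement of $\z_d$ contains a full $\z_d$-coset and therefore $\z_d$ itself. Applied to $\mathcal{R}=\S\wedge_{[\z_d,\z_d]}\T$, primitivity of $\S$ forces its $\mathcal{R}$-subgroups inside $\z_d$ to be only $1$ and $\z_d$, while any $\mathcal{R}$-subgroup $H\supsetneq\z_d$ gives $\mathcal{R}_H=\S\wedge_{[\z_d,\z_d]}(\mathcal{R}_H/\z_d)$, which is wedge-decomposable via the proper section $[\z_d,H]$ in $H$ and therefore not indecomposable. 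Combined with $\omega(\mathcal{R})=\omega(\S)=\omega(\mathcal{R}_{\z_d})$ and the fact that $\S$ itself is indecomposable (primitive rings admit no proper $\S$-sections at all), $\z_d$ is the maximal $\mathcal{R}$-subgroup $H$ with $\mathcal{R}_H$ indecomposable, so the wedge-core of $\mathcal{R}$ is $\mathcal{R}_{\z_d}=\S$.

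For the reverse direction, given $\mathcal{R}$ over $\z_n$ with wedge-core $\S$ over $\z_d$, I would prove $\mathcal{R}=\S\wedge_{[\z_d,\z_d]}(\mathcal{R}/\z_d)$ by induction on the length $m$ of the iterated wedge-decomposition chain $\mathcal{R}=\mathcal{R}_{H_0}\supsetneq\mathcal{R}_{H_1}\supsetneq\cdots\supsetneq\mathcal{R}_{H_m}=\S$ used to reach the core. The base case $m=0$ gives $d=n$ and $\mathcal{R}=\S$, handled directly. For the inductive step, write the topmost wedge decomposition as $\mathcal{R}=\mathcal{R}_{H_1}\wedge_{[K_1,H_1]}\mathcal{R}/K_1$; by induction $\mathcal{R}_{H_1}=\S\wedge_{[\z_d,\z_d]}(\mathcal{R}_{H_1}/\z_d)$, so the dichotomy from the forward direction applies to $\mathcal{R}_{H_1}$-subgroups. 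Hence $K_1\le H_1$ either lies in $\z_d$, in which case primitivity of $\S$ together with $K_1>1$ (from properness of the section) forces $K_1=\z_d$, or contains $\z_d$; either way, $\z_d\le K_1$. It follows that every $\mathcal{R}$-class outside $H_1$, being a union of $K_1$-cosets, is a union of $\z_d$-cosets; combined with the inductive description of $\mathcal{R}_{H_1}$-classes, every $\mathcal{R}$-class outside $\z_d$ is a union of $\z_d$-cosets, which is exactly the statement $\mathcal{R}=\S\wedge_{[\z_d,\z_d]}(\mathcal{R}/\z_d)$.

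The main obstacle is the subgroup dichotomy in the reverse direction: the inductive argument must thread together primitivity of $\S$ (to control $\S$-subgroups inside $\z_d$) with the inductive hypothesis on $\mathcal{R}_{H_1}$ (to propagate the dichotomy up the chain) in order to guarantee $\z_d\le K_1$. Without this containment, classes of $\mathcal{R}$ outside $H_1$ might fail to be $\z_d$-coset unions, breaking the claimed wedge expression.
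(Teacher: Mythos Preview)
Your proof is correct and follows the same approach as the paper: primitivity of $\S$ forces any wedge section with left factor $\S$ to be the trivial section $[\z_d,\z_d]$, so the rings with core $\S$ are exactly $\S\wedge\T$ for arbitrary $\T$ over $\z_{n/d}$. The paper's three-line argument leaves implicit the two directions of the bijection that you spell out carefully (that $\S\wedge\T$ really has core $\S$, and that every $\mathcal{R}$ with core $\S$ decomposes this way); your only slip is writing ``the proper section $[\z_d,H]$'' where you mean $[\z_d,\z_d]$.
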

\begin{proof}
If $\S\wedge_{U} \T$ is a Schur ring over $\z_n$ with section $U=[K,H]$, then $K=H$ by the primitivity of $\S$. As this is a trivial section, wedge-compatibility is automatic. Thus, $\T$ could be any Schur ring over $\z_n/\z_d \cong \z_{n/d}$.
\end{proof}

\section{Examples of Enumerating Schur Rings over Cyclic Groups}\label{sec:proof}
Using the strategies from the previous section, we consider some examples of enumerating Schur rings over cyclic groups.  The first two can be found in \cite{Counting}. 

\begin{Exam} \label{exam:oddprime}
Let $p$ be an odd prime and let $x$ be the number of divisors of $p-1$. Then $\Omega(p)=x$, as all Schur rings over $\z_p$ are automorphic, including the trivial one, and these Schur rings are in one-to-one correspondence with the divisors of $p-1$.

Let $n=p^k$ for $k>1$. Then the indecomposable Schur rings over $\z_{p^k}$ are $\z_{p^k}^0$ and those $x$-many top-layer automorphic Schur rings. In \cite[Lemma 4.4]{Counting}, we see that for Schur rings $\S$ and $\T$ over $\z_{p^k}$ whose cores are indecomposable automorphic Schur subrings of the same order $p^j$, $\Omega(p^k, \S) = \Omega(p^k, \T) = \Omega(p^k,p^j)$. This equality is based upon identical recursive relations on these different functions. Note that for every order $p^j$, if $\Omega(p^k,\S)=\Omega(p^k,p^j)$ then there are exactly $x=\Omega(p)$ many options for the core of $\S$, namely the $x$ indecomposable automorphic Schur rings over $\z_{p^j}$.  Hence, 
\[\Omega(p^k) = \sum_{j=1}^k (\Omega(p^k, \z_{p^j}^0) + x\Omega(p^k, p^j)) - \Omega(p^k,p).\] Of course, $\Omega(p^k, \z_{p^j}^0) = \Omega(p^{k-j})$ and $\Omega(p^k,p)=\Omega(p^{k-1})$ by \lemref{lem:primcount}. Likewise, $\Omega(p^k,p^k)=1$, as observed in the paragraph before \lemref{lem:primcount}. Finally, by observing the recursive relation $\Omega(p^k, p^j) = \sum_{i=j}^k \Omega(p^{k-1}, p^{i-1})$ for $1<j<k$, we may unravel the unravel the equation above into
\[\Omega(p^k) = x\Omega(p^{k-1}) + \sum_{j=2}^k(c_{j-1}x + 1)\Omega(p^{k-j}),\] where $c_k = \dfrac{1}{k+1}\dbinom{2k}{k}$ is the $k$th Catalan number. We illustrate this formula for small powers of $p$. Of course, $\Omega(p)=x$. For $n= p^2$, 
\begin{multline*}
\Omega(p^2) = x\Omega(p^2,p) + \Omega(p^2,\z_{p^2}^0)+x\Omega(p^2,p^2) = x\Omega(p) + \Omega(1) + x\cdot 1 = x^2+x+1.
\end{multline*}
For $n= p^3$, 
\begin{multline*}
\Omega(p^3) = x\Omega(p^3,p) + \Omega(p^3,\z_{p^2}^0)+x\Omega(p^3,p^2) + \Omega(p^3,\z_{p^3}^0)+x\Omega(p^3,p^3)\\ = x\Omega(p^2) + \Omega(p) + x(\Omega(p^2,p) + \Omega(p^3,p^3)) + \Omega(1) + x\cdot 1\\ = x(x^2+x+1) + x + x(\Omega(p) + \Omega(1)) + 1+x\\ = x^3+2x^2+4x+1. \qedhere
\end{multline*}
\end{Exam}

\begin{Exam} Let $p=2$. Of course, $\Omega(2)=1$, namely $\z_2$ itself. Similarly, $\Omega(4)=3$, namely $\z_4^0$, $\z_2\wedge \z_2$, and $\z_4$. For $k>2$, we note that $U(2^k) \cong \z_2\times \z_{2^{k-1}}$. Unlike the odd prime case, this automorphism group is non-cyclic and contains diagonal subgroups. As such, the number of Schur rings which have $\z_{2^j}$ as its core will be distinct from the number of Schur rings which have $\z_{2^j}^\pm$, the automorphic Schur ring corresponding to the inversion map, as its core. Hence,
\[\Omega(2^k) = \sum_{j=2}^k \Omega(2^k, \z_{2^j}^0) + \Omega(2^k, 4) + \sum_{j=3}^k (\Omega(2^k, 2^j) + 2\Omega(2^k, \z_{2^j}^\pm).\] Some of the calculations are similar to the previous case, by virtue of \lemref{lem:primcount}, namely:  $\Omega(2^k, 2^k) = \Omega(2^k, \z_{2^k}^\pm) =1$, $\Omega(2^k, \z_{2^j}^0) = \Omega(2^{k-j})$, and $\Omega(2^k, 2) = \Omega(2^{k-1})$, but the remaining recursive relations are much more complicated, namely: $\Omega(2^k, 4)=\Omega(2^{k-1}) - \sum_{j=1}^{k-2} \Omega(2^{k-2}, \z_{2^j}^0)$, $\Omega(2^k, 2^j)=\sum_{i=j-1}^{k-1} \Omega(2^{k-1}, 2^i)$, $\Omega(2^k, \z_{2^j}^\pm)=\Omega(2^{k-1},\z_{2^{j-1}}^\pm)+2\sum_{i=j}^{k-1}\Omega(2^{k-1}, \z_{2^i}^\pm)$, and $\Omega(2^k, \z_{8}^\pm)=\Omega(2^{k-1}, 4)+2\sum_{i=3}^{k-1}\Omega(2^{k-1}, \z_{2^i}^\pm).$ Unraveling the recursive relations, we have
\[
\Omega(2^k) = \sum_{j=1}^3 2^j\Omega(2^{k-j}) - (c_{k-1} + s_{k-1}) + \sum_{j=4}^k\left(c_{j-1} + s_{j-1}-\sum_{i=1}^{j-3}(c_i+s_i)\right)\Omega(2^{k-j})\] where $c_k = \dfrac{1}{k+1}\dbinom{2k}{k}$ is the $k$th Catalan number and $s_k = \dsum_{j=0}^k \frac{1}{j+1}\binom{2j}{j}\binom{k+j}{2j}$  is the $k$th Schr\"{o}der number.
\end{Exam}

The following two examples can be found in \cite{CountingIII}.  

\begin{Exam} Let $p$ and $q$ be distinct primes. For the case $n=pq$
we do not need to consider any direct products, as they are all automorphic (note all Schur rings over $\z_p$ and $\z_q$ are automorphic). As the only proper section over $\z_{pq}$ are trivial, the only wedge products are over the sections $[p,p]$ and $[q,q]$, which give, by \lemref{lem:primcount}, $\Omega(p)\Omega(q)$ and $\Omega(q)\Omega(p)$ many Schur rings, respectively. Thus, 
\[\Omega(pq) = 2\Omega(p)\Omega(q) + |\L(U(pq))|+1.\]
Let $p=\prod_{i=1}^n r_i^{k_i} +1$ and $q=\prod_{i=1}^n r_i^{\ell_i} +1$, where $\{r_1, r_2,\ldots, r_n\}$ is a list of distinct primes. Combining the above formula with \lemref{lem:latticeabelian}, we have 
\[\Omega(pq) = \prod_{i=1}^n \sum_{j=0}^{\min(k_i,\ell_i)} \phi(r_i^{j})(k_i-j+1)(\ell_i-j+1) + 2\prod_{i=1}^n (k_i+1)(\ell_i+1) + 1,\] where $\phi$ denotes Euler's totient function.  We illustrate this formula for the cases $n= 2p, 3p, 5p$. If $p\neq 2$ is a prime and $x$ is the number of divisors of $p-1$, then 
\[\Omega(2p)=3x+1.\] If $p\neq 3$ is a prime such that $p=2^ka+1$ where $a$ is odd, then 
\[\Omega(3p) = \left(\dfrac{7k+6}{k+1}\right)x + 1.\] If $p\neq 5$ is a prime such that $p=2^ka+1$ where $a$ is odd, then 
\[\Omega(5p) = \left(\dfrac{13k+7}{k+1}\right)x + 1. \qedhere\] 
\end{Exam}

\begin{Exam} Let $p$ be an odd prime such that $p = 2^ka+1$, where a is an odd integer and $x$ the number of divisors of $p-1$. For the case $n=4p$, the indecomposable automorphic Schur rings will be in one-to-one correspondence with the subgroups of $U(4p) \cong \z_2\times \z_{p-1}$. To see this, we note that $U(4p)$ contains exactly two layers lattice-isomorphic to $U(p)$ with some diagonal subgroups sitting in between  the layers. The top-layer subgroups of $U(4p)$ correspond to direct products $\z_4\times \S$, where $\S$ is a Schur ring over $\z_p$, with respect to the usual correspondence of automorphic Schur rings and automorphic subgroups. On the other hand, the direct products of the form $\z_4^0\times \S$ are indecomposable and can be made to correspond the the subgroups of the bottom-layer of $U(4p)$. Those automorphic Schur rings which correspond to diagonal subgroups of $U(4p)$ necessarily must be wedge-indecomposable. Hence, if we combine together these three sets of Schur rings, we establish a one-to-one correspondence between them and the subgroups of $U(4p)$. Therefore,
\begin{multline*} \Omega(4p) = \Omega(4p,2) + x\Omega(4p,p) + \Omega(4p, \z_{2p}^0) + x\Omega(4p, 2p) + \Omega(4p,\z_4^0)+\Omega(4p,4)\\ + \Omega(4p, \z_{4p}^0) + |\L(U(4p))| = \Omega(2p) + x\Omega(4) + \Omega(2) + x(\Omega(2p,p)+\Omega(2p,2p)+\Omega(4,2)\\
+\Omega(4,4)-\Omega(2)) +\Omega(p)+ (\Omega(2p,2) + x\Omega(2p,2p)) + \Omega(1) + |\L(U(4p))|\\
=(3x+1)+3x+1+x(\Omega(2)+\Omega(1)+\Omega(2) + 1 - 1) + x + (\Omega(p)+x\cdot1) + 1 + |\L(U(4p))|\\
=12x+3+ |\L(U(4p))|.
\end{multline*}
To count the automorphic Schur rings, we compute 
\[|\L(U(4p))|  = |\L(\z_2\times \z_{2^k})||\L(\z_a)| = (2(k+1)+k)\left(\dfrac{x}{k+1}\right)=\dfrac{3k+2}{k+1}x.\]
Therefore, \[\Omega(4p)=\frac{15k+14}{k+1}x+3. \qedhere\]  
\end{Exam}



Finally, we present a new example to illustrate these enumeration techniques. 

\begin{Exam}
For $n=2p^2$,
\begin{multline*} \Omega(2p^2) = \Omega(2p^2, 2) + x\Omega(2p^2,p) + \Omega(2p^2, \z_{2p}^0) + x\Omega(2p^2, 2p)+\Omega(2p^2, \z_{p^2}^0) +x\Omega(2p^2, p^2) + \Omega(2p^2, \z_{2p^2}^0)\\ + \Omega(2p^2, \z_2\times \z_{p^2}^0) + x\Omega(2p^2, 2p^2) = \Omega(p^2) + x\Omega(2p) + \Omega(p) + x(\Omega(p^2,p) + \Omega(p^2,p^2) + \Omega(2p,2) + x\Omega(2p,2p)\\ - \Omega(2p,2p)) + \Omega(2) + x(\Omega(2p,2)+\Omega(2p, 2p)) + \Omega(1) + x\cdot 1 = (x^2+x+1)+x(3x+1) + x + x(\Omega(p)+\Omega(1)\\+\Omega(p)+ x\Omega(1)-\Omega(p)) + 1 + x(\Omega(2)+1) + 1 + x = 6x^2+7x+4. 
\end{multline*} By a similar computation that is omitted here, we have 
\[\Omega(2p^3) = 10x^3+21x^2+31x+6. \qedhere\]
\end{Exam}

\section{Conclusion}
We can extrapolate from the above examples a general strategy for counting Schur rings over cyclic groups. We begin by identifying the indecomposable Schur rings for all divisors of the order $n$. Once this is complete, we proceed to enumerate all wedge products choosing as the left factor only indecomposable Schur rings and choosing as sections only those of the form $[K,H]$, where $K$ is the minimal subgroup of the left wedge-factor. This is trivial for primitive Schur rings and the Principle of Inclusion-Exclusion is necessary when distinct minimal subgroups are present. For every left factor in $\S\wedge_{[d,e]} \T$, there are $\Omega(n/d, \pi(\S))$ options for $T$. The sum of these mutually exclusive cases gives $\Omega(n)$. 

This general strategy comes with two major obstacles. First, it requires a strong understanding of the recursive nature of the function $\Omega(n,\S)$. While ad hoc arguments are used here to handle the examples considered herein, the potential complexity of $\Omega(n,\S)$ is seen clearly in \cite{Counting}. Second, it requires we be able to effectively enumerate the indecomposable Schur rings. While primitive Schur rings are easy to identify for cyclic groups and direct products are indecomposable only if their direct factors are, the indecomposable automorphic Schur rings are a greater challenge. As we saw throughout, we can identify the indecomposable automorphic rings using the lattice $\L(U(n))$, but this lattice becomes increasingly more difficult as the rank of $U(n)$ increases. C\u{a}lug\u{a}reanu's technique provides an effective method of counting subgroups of an abelian group of rank 2, but it becomes far less effective for rank 3 and beyond. An explicit formula for enumerating subgroups of an abelian groups of rank 3 is fairly recent (see the aforementioned references for details), and, at the time of writing, any explicit formula beyond rank 3 has yet to be discovered. As such, any explicit formula for enumerating Schur rings over cyclic groups is unlikely without an explicit formula for counting subgroups of abelian groups.

\bibliographystyle{plain}
\bibliography{Srings}
\end{document}